\theoremstyle{plain}
\newtheorem{thm}{Theorem}[section]
\theoremstyle{definition}
\newtheorem{rem}[thm]{Remark}
\numberwithin{equation}{section}
\def\E{\mathbb{E}}
\def\N{\mathbb{N}}
\def\P{\mathbb{P}}
\def\R{\mathbb{R}}
\def\inlaw{\stackrel{d}{=}}
\title{Functional limit theorems for elephant random walks on general periodic structures} 
\author{Shuhei Shibata\thanks{Joint Graduate School of Mathematics for Innovation, Kyushu
   University, Fukuoka, 819-0395, JAPAN.
   \textit{E-mail address}: \texttt{shibata.shuhei.746@s.kyushu-u.ac.jp}\\
   2020 Mathematics Subject Classification. 60F17; 60K35; 05C81.}}
\date{}
\begin{document}
\maketitle
\begin{abstract}
This paper investigates functional limit theorems for the Elephant Random Walk (ERW) on general periodic structures, extending the Bertenghi's results on $\mathbb{Z}^d$. Our results reveal new structure-dependent quantities that do not appear in the classical setting $\mathbb{Z}^d$, highlighting how the underlying structure affects the asymptotic behavior of the walk.
\end{abstract}

\section{Introduction}

The asymptotic behavior of random walks with long-range memory has been extensively studied in recent years. In particular, Elephant Random Walk (ERW) has attracted considerable attention. The ERW model was introduced by Sch\"{u}tz and Trimper \cite{schutz2004elephants} in 2004 to study memory effects in a one-dimensional discrete-time random walk with a complete memory of its past, and it exhibits a phase transition from diffusive to superdiffusive behavior.

One natural generalization of the one-dimensional ERW is to consider higher-dimensional settings, particularly random walks on the standard lattice $\mathbb{Z}^d$. A number of works have addressed this extension (cf.\cite{bercu2019martierw, bercu2021onthecenter, bertenghi2022functionallimit, guerin2023fixed, gonzalez2020multidimensional, curien2023recurrence, bercu2025multierwstop, chen2023analysis, qin2025reccurence}).
Bercu and Laulin \cite{bercu2019martierw} investigated the asymptotic behavior of the Multi-dimensional Elephant Random Walk (MERW) via a martingale approach. Later, Bertenghi \cite{bertenghi2022functionallimit} established functional limit theorems for MERW by employing a P\'{o}lya-type urn analysis, following the approach initiated by Baur and Bertoin \cite{baur2016elephant}. This connection between the (M)ERW and P\'{o}lya urns enables the derivation of functional limit theorems for the (M)ERW within the general framework developed by Janson \cite{janson2004functional}.

In addition to studying the asymptotic behavior of a single walker, another natural extension is to investigate the interaction of multiple ERWs, particularly the collision problem—that is, whether two independent elephant random walks on the same lattice meet infinitely often or only finitely many times. Roy, Takei and Tanemura \cite{roy2024often} studied the case where both ERWs on $\mathbb{Z}$ have the same memory parameter. Later, Shibata and Shirai \cite{shibata2025remark} extended their results to the case of different memory parameters and also obtained asymptotic results for the distance between them. Although this paper does not address the collision problem, we mention it here to emphasize the diversity of recent developments in the study of ERW.

Most of the works mentioned above focus on the ERWs on the standard lattice $\mathbb{Z}^d$.
In this paper, we extend this framework to more general structures, including $\mathbb{Z}^d$, as well as the \textit{triangular lattice}, the \textit{hexagonal lattice} and the \textit{brick wall}, see Fig.~\ref{fig:lattices}. Inspired by the work of Bertenghi \cite{bertenghi2022functionallimit}, who established functional limit theorems for the ERW on $\mathbb{Z}^d$, we derive the corresponding asymptotic results for the ERW on more general state spaces such as these lattices, using a P\'{o}lya-type urn approach. Our results reveal new underlying structure-dependent quantities which are not observed in the classical $\mathbb{Z}^d$ case, demonstrating that the underlying structure significantly influences the asymptotic behavior of the walk.

\begin{figure}[htbp]
    \centering
    \begin{subfigure}{0.3\textwidth}
        \centering
        \includegraphics[width=\linewidth]{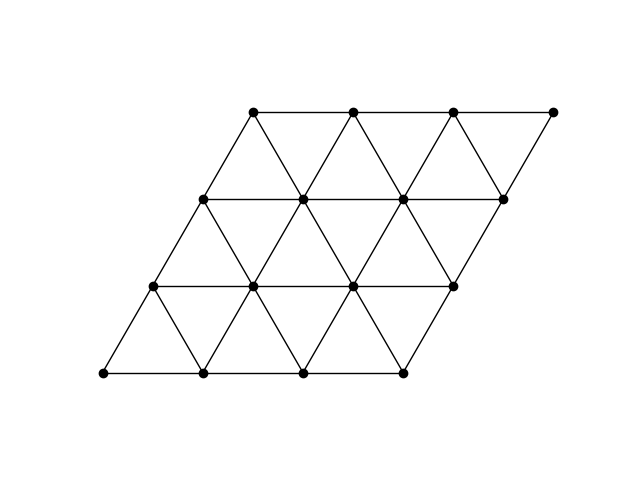}
        \label{fig:triangular}
    \end{subfigure}
    \begin{subfigure}{0.3\textwidth}
        \centering
        \includegraphics[width=\linewidth]{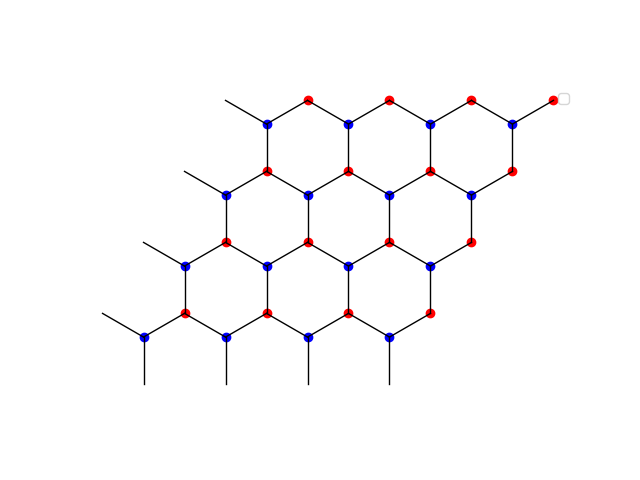}
        \label{fig:hexagonal}
    \end{subfigure}
    \begin{subfigure}{0.3\textwidth}
        \centering
        \includegraphics[width=\linewidth]{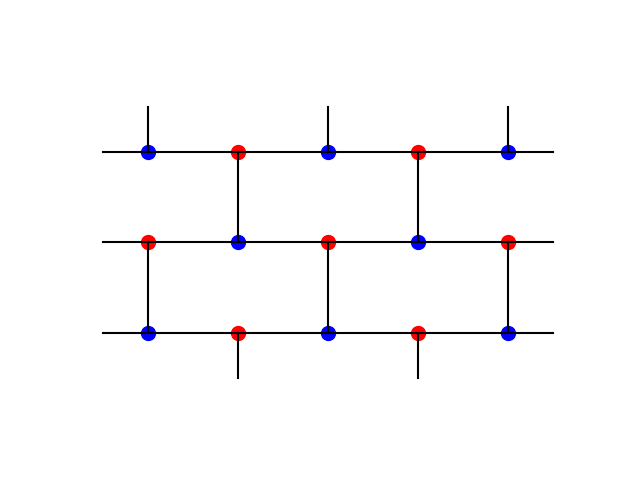}
        \label{fig:blockfence}
    \end{subfigure}
      \caption{\textbf{Figure $1$}. Left: Triangular lattice. All vertices (black dots) are structurally equivalent. Center: Hexagonal lattice. The vertices are partitioned into two structurally distinct classes, represented by red and blue dots. Right: Brick wall. As in the hexagonal lattice, the vertices are divided into two structurally distinct classes (red and blue dots).}
     \label{fig:lattices}
\end{figure}

The remainder of the paper is organized as follows. In Section $2$ and Section $3$, we provide the mathematical definition of the state spaces we consider and the ERW on them. In Section $4$, we introduce  discrete-time urn models that serves as a framework for describing the position of the ERW. In Section $5$, we present the main asymptotic results for ERW based on the urn analysis. Finally, in Section $6$, for the reader's convenience, we summarize the key quantities derived in this paper for several typical examples.

\section{Settings}\label{setting main}

To set the stage for our results, we first introduce the mathematical framework considered in this paper.

Given two finite sets of distinct vectors $\mathcal{U}:=\{u_1,\dots,u_m\}\subset \mathbb{R}^d$ and $\mathcal{W}:=\{w_1,\dots,w_{m'}\}\subset \mathbb{R}^d$ for fixed integers $m, m'\ge 2$. We call the elements of $\mathcal{U}$ and $\mathcal{W}$ \textit{step vectors} since, in Section~\ref{ERW def V_0=2}, we will define a random walk whose steps are drawn from these sets. We will consider two settings depending on whether the step sets coincide, namely the case $\mathcal{U}=\mathcal{W}$ and $\mathcal{U}\neq \mathcal{W}$.

\begin{itemize}
    \item \textbf{Case $\mathcal{U}=\mathcal{W}$:} In this case,
we consider the semigroup generated by them, that is, 
\begin{equation}\label{setting 1,2}
     \Gamma=\{\sum_{i=1}^m k_i u_i;\ k_i\in \mathbb{N}\cup \{0\}\}\subset \mathbb{R}^d.
 \end{equation}
Here, we assume that this semigroup $\Gamma$ is a \textit{lattice} in $\mathbb{R}^d$, i.e., it is a discrete additive subgroup that spans $\mathbb{R}^d$ as a vector space over $\mathbb{R}$. For this case, $\Gamma$ is analyzed as a single-colored vertex set, as in the triangular lattice on the left in Fig.~\ref{fig:lattices}, owing to the structural indistinguishability of all vertices.

\item \textbf{Case $\mathcal{U}\neq \mathcal{W}$:}
In this case, we first define the semigroup
\begin{equation*}
    \Gamma_0=\{\sum_{i=1}^m k_i u_i+\sum_{j=1}^{m'} l_j w_j;\ \sum_{i=1}^m k_i=\sum_{j=1}^{m'}l_j,\ k_i,l_j\in \mathbb{N}\cup \{0\}\}.
\end{equation*}
Here, we assume that this semigroup is a lattice in $\mathbb{R}^d$. We then consider the following state space $\Gamma$ that can be expressed as the disjoint union of $\Gamma_0$ and the translated set $\Gamma_0+\mathcal{U}$:
\begin{equation}\label{setting 2,2}
    \Gamma=\Gamma_0\sqcup (\Gamma_0 +\mathcal{U})\subset \mathbb{R}^d.
\end{equation}
We define the vertex classes by $Z_\mathcal{U}:=\Gamma_0$ and $Z_\mathcal{W}:=\Gamma_0+\mathcal{U}$. Under these settings, we enforce the \textit{Alternating Rule}: 
\begin{itemize}
    \item For any $z\in Z_\mathcal{U}$ and $i=1,\dots,m$, $z+u_i\in Z_\mathcal{W}$.
    \item For any $z\in Z_\mathcal{W}$ and $j=1,\dots,m'$, $z+w_j\in Z_\mathcal{U}$.
\end{itemize}
This rule ensures that the walk defined in Section~\ref{ERW def V_0=2} always remains within $\Gamma$. $\Gamma$ is therefore a bipartite graph with vertices classified according to the decomposition in (\ref{setting 2,2}), and is analyzed as a two-colored vertex set, as in the hexagonal lattice and brick wall shown in the center and right panels of Fig.~\ref{fig:lattices}.
\end{itemize}

\begin{rem}
When $\mathcal{U}=\mathcal{W}= \{\pm e_1, \pm e_2, \dots,\pm e_d\}$ where $e_i$ denotes the $i$-th standard basis vector of $\mathbb{R}^d$, $\Gamma$ coincides with the standard integer lattice $\mathbb{Z}^d$. Note that there are also many other ways to choose the vectors that generate $\mathbb{Z}^d$.
Various well-known structures, such as the triangular lattice, the hexagonal lattice and the brick wall, see Fig.~\ref{fig:lattices}, can be realized as $\Gamma$, by choosing suitable families of step vectors $\mathcal{U}$ and $\mathcal{W}$. For clarity, in Section~\ref{section examples}, we list the representative choices of $\mathcal{U}$ and $\mathcal{W}$ for several typical examples. 
\end{rem}

\begin{rem}
We introduce the $\Gamma$ given by (\ref{setting 2,2}) to represent a setting in which each vertex is assigned a specific type (or color), as in the hexagonal lattice.
Although the following example does not satisfy the decomposition of (\ref{setting 2,2}), one could construct an ERW model and a corresponding setting that covers it:
\begin{equation*}
    \mathcal{U} = \{\pm e_1, \pm e_2, (1,1)^\top\},\quad \mathcal{W} = \{\pm e_1, \pm e_2\}.
\end{equation*}
 
 We also note that the trivial case $m=1$ can be analyzed within the same framework, but it is excluded from our study.
\end{rem}

\section{Definition of the ERW on \texorpdfstring{$\Gamma$}{Gamma}}\label{ERW def V_0=2}
Next, we introduce the mathematical definitions of the ERW $\{S_n\}_{n=0}^{\infty}$ on $\Gamma$.  

For a given dimension $d\ge 1$ and the sets $\mathcal{U}$ and $\mathcal{W}$ introduced in Section~\ref{setting main}, let $\{\sigma_i\}_{i=1}^\infty$ and $\{\tau_j\}_{j=1}^\infty$ be random sequences taking values in $\mathcal{U}$ and $\mathcal{W}$, respectively. First, we discuss the case $\mathcal{U}\neq\mathcal{W}$. We consider an ERW that alternately moves between two vertex classes $Z_\mathcal{U}$ and $Z_\mathcal{W}$ by making a step chosen from $\mathcal{U}$ when at $Z_{\mathcal{U}}$, and a step chosen from $\mathcal{W}$ when at $Z_{\mathcal{W}}$. At time $n=0$, the elephant is started from the origin in $Z_{\mathcal{U}}$. Let $1\le i_0\le m$ and $1\le j_0\le m'$ be fixed. At time $n=1$, for the sake of definiteness, the elephant takes a deterministic step in the direction of $u_{i_0}$, i.e., $\sigma_1=u_{i_0}$. By the Alternating Rule, it then moves from the origin to a vertex in $Z_{\mathcal{W}}$. At time $n=2$, the elephant takes a deterministic step in the direction of $w_{j_0}$, i.e., $\tau_1=w_{j_0}$. By the Alternating Rule, it then moves from the vertex in $Z_{\mathcal{W}}$ to a vertex in $Z_{\mathcal{U}}$. Subsequently, the elephant proceeds by alternately visiting the two vertex classes $Z_{\mathcal{U}}$ and $Z_{\mathcal{W}}$, 
and generates an alternating sequence of step choices, $\sigma_1,\tau_1,\sigma_2,\tau_2,\sigma_3,\tau_3,\dots$.

For each $n=1,2,\dots$, given the past steps $\sigma_1, \sigma_2, \dots, \sigma_n$ and $\tau_1, \tau_2, \dots, \tau_n$, we define the $(n+1)$-th step of the two processes by
\begin{equation*}
    \P(\sigma_{n+1}=\sigma_{U_n})=p,
\end{equation*}
\begin{equation*}
    \P(\sigma_{n+1}=\sigma)=\frac{1-p}{m-1}\quad \text{for all $\sigma\in \mathcal{U}\setminus \{\sigma_{U_n}\}$},
\end{equation*}
and
\begin{equation*}
    \P(\tau_{n+1}=\tau_{W_n})=p,
\end{equation*}
\begin{equation*}
    \P(\tau_{n+1}=\tau)=\frac{1-p}{m'-1}\quad \text{for all $\tau\in \mathcal{W}\setminus \{\tau_{W_n}\}$},
\end{equation*}
where $p \in (0, 1)$ is called the \textit{memory parameter}, and the random variables $U_n$ and $W_n$ are uniformly distributed on $\{1,2,\dots,n\}$. We assume that all four collections of random variables, $U_n$ and $W_n$ and the process $\{\sigma_i\}_{i=1}^n$ and the process $\{\tau_j\}_{j=1}^n$, are mutually independent. 

The sequences $\{\sigma_i\}_{i=1}^\infty$ and $\{\tau_j\}_{j=1}^\infty$ generate the ERW $\{S_n\}_{n=0}^\infty$ on $\Gamma$ given by (\ref{setting 2,2}) with $S_0=\mathbf{0}$ defined by
\begin{equation}\label{def ERW S 2}
    S_{2n}=\sum_{i=1}^{n} (\sigma_i+\tau_i) \quad \text{and} \quad S_{2n-1}=S_{2(n-1)}+\sigma_n \quad \text{for $n=1,2,\dots$},
\end{equation}
where $\mathbf{0}:=(0,\dots,0)^\top\in \mathbb{R}^d$. We call this process \textit{Type-II ERW}. It is important to note that the ERW moves alternately between $Z_\mathcal{U}$ and $Z_\mathcal{W}$ with taking values alternately from  $\{\sigma_i\}_{i=1}^\infty$ and $\{\tau_j\}_{j=1}^\infty$. This ensures that over any $2n$ steps, the number of movements drawn from the set $\mathcal{U}$ is precisely $n$, and similarly, the number of movements drawn from $\mathcal{W}$ is precisely $n$. This deterministic balance in step counts plays a key role in our analysis.

Next, we discuss the case $\mathcal{U}=\mathcal{W}$. We note that since the process depends on the entire past trajectory, we describe the definition separately from the case $\mathcal{U}\neq\mathcal{W}$.

The ERW $\{S_n\}_{n=0}^\infty$ on $\Gamma$ given by (\ref{setting 1,2}) with $S_0=\mathbf{0}$ is defined by 
    \begin{equation}\label{def ERW S 1}
      S_n=\sum_{i=1}^n \sigma_i  \quad \text{for $n=1,2,\dots$}.
    \end{equation} 
    We call this process \textit{Type-I ERW}. When $\mathcal{U}=\mathcal{W}= \{\pm e_1, \pm e_2, \dots,\pm e_d\}$, i.e., $\Gamma=\mathbb{Z}^d$, we recover the MERW introduced by Bercu and Laulin \cite{bercu2019martierw} and Bertenghi \cite{bertenghi2022functionallimit}.

\begin{rem}\label{rem U=W case}
    The essential difference between Type-I ERW and Type-II ERW does not lie in the distinction between $\mathcal{U}=\mathcal{W}$ and  $\mathcal{U}\neq \mathcal{W}$, but rather in how past steps are chosen: For Type-II ERW, only steps chosen from odd or even times are used, whereas for Type-I ERW, the choice depends on the entire past.
For example, when considering $\mathcal{U}=\mathcal{W}=\{\pm e_1, \pm e_2\}$, our current formulation defines the ERW using (\ref{def ERW S 1}), but one could also consider an ERW based on (\ref{def ERW S 2}) and then this yields a different stochastic model. However, in Section~\ref{section main result 2}, we show that the limiting distributions of Type-II ERW under  $\mathcal{U}=\mathcal{W}$ and Type-I ERW coincide, except for the superdiffusive case where the initial step affects the asymptotic behavior of the walk, see Theorem~\ref{supersuperdiffusive V_0=2}. 
\end{rem}

\section{Connection to the P\'{o}lya-type urns}\label{Section urn}

In this section, we explain the relation between the ERW on  $\Gamma$ and P\'{o}lya-type urn models. The urn model used for the ERW belongs to the results provided by Janson \cite{janson2004functional} for the more general framework of P\'{o}lya urns. 

First, we discuss the case  $\mathcal{U}\neq\mathcal{W}$. Consider two discrete-time urns. One urn contains balls of $m$ distinct colors, and the other contains balls of $m'$ distinct colors. At any time $n\in \N$, the compositions of the two urns are described by $m$-dimensional random vector $X_n=(X_n^1,X_n^2,\dots,X_n^m)^\top\in (\N\cup \{0\})^m$ and $m'$-dimensional random vector $Y_n=(Y_n^1,Y_n^2,\dots,Y_n^{m'})^\top\in (\N\cup \{0\})^{m'}$, where each component $X_n^i$ and $Y_n^j$ denote the number of balls of color $i\in \{1,\dots,m\}$ and $j\in \{1,\dots,m'\}$ at time $n$, respectively. Let $X_0=\mathbf{0}$ and $Y_0=\mathbf{0}$, that is there are no balls in the two urns at time zero. Let $1\le i_0\le m$ and $1\le j_0\le m'$ be as defined in Section \ref{ERW def V_0=2}. At time $n=1$, we restrict ourselves to the initial deterministic configuration $X_1=\mathbf{e}_{i_0}$ and $Y_1=\mathbf{e}^{'}_{j_0}$, where $\mathbf{e}_k$ and $\mathbf{e}^{'}_k$ denote the $k$-th standard basis vectors of $\mathbb{Z}^m$ and $\mathbb{Z}^{m'}$, respectively. For each urn process at any time $n\ge 2$, we draw one ball uniformly at random from the urn, observe its color, return it to the same urn, and then, with probability $p$, add one additional ball of the same color to the same urn. With the remaining probability $1 - p$, we instead add one ball among the $m - 1$ remaining colors to the same urn, each with probability $(1 - p)/(m - 1)$. We then update $X_n=(X_n^1,X_n^2,\dots,X_n^m)^\top$ accordingly, so that $X_n$ describes the composition of the urn after the $(n - 1)$-th drawing. The process $Y_n$ is defined similarly to $X_n$, and the two processes are assumed to be independent. 

The connection with the ERW model is the following: If $\{S_n\}_{n=0}^{\infty}$ is Type-II ERW, then we have 
\begin{align}\label{relation between ERW and urn with |V_0|=2|}
    \{S_{2n}\}_{n=0}^{\infty}&\inlaw \{\sum_{i=1}^m X_n^iu_i+\sum_{j=1}^{m'}Y_n^{j}w_j\}_{n=0}^\infty,
\end{align} 
where $\inlaw$ denotes equality in distribution. This means that the extent to which the ERW moves in the directions $u_i$ and $w_j$ at time $n$ is governed by the numbers of balls $X_n^i$ and $Y_n^j$, respectively.  

In the case $\mathcal{U}=\mathcal{W}$, the connection with Type-I ERW is given by
    \begin{align}\label{relation between ERW and urn with |V_0|=1|}
    \{S_{2n}\}_{n=0}^{\infty}&\inlaw \{\sum_{i=1}^m X_{2n}^iu_i\}_{n=0}^\infty.
\end{align} 
\begin{rem}\label{rem U=W polya connection}
    We note that in general, even when $\mathcal{U}=\mathcal{W}$, (\ref{relation between ERW and urn with |V_0|=2|}) and (\ref{relation between ERW and urn with |V_0|=1|}) cannot be combined, as sampling from the entire urn process $X^i_{2n}$ (whole past) and sampling only from one of the two urn processes $X^i_n$ and $Y^j_n$ (odd or even times) produce distinct stochastic processes, see also Remark~\ref{rem U=W case}.
\end{rem}

As demonstrated in (\ref{relation between ERW and urn with |V_0|=2|}) and (\ref{relation between ERW and urn with |V_0|=1|}), to understand the long-time behavior of the ERW, it suffices to study the asymptotic behavior of the corresponding urn process. The key quantities governing the asymptotics of the urn process are the eigenvalues and eigenvectors of the so-called \textit{replacement matrix}. In our case, the replacement matrices associated with the processes $X_n$ and $Y_n$ are given by 
 the $m\times m$ matrix
\begin{equation}\label{replacement matrix 2}
        A=\begin{pmatrix}
            p &\frac{1-p}{m-1}&\cdots &\frac{1-p}{m-1}\\
            \frac{1-p}{m-1} &p&\ddots&\vdots\\
            \vdots&\ddots&\ddots&\vdots\\
            \frac{1-p}{m-1}&\cdots &\cdots &p
        \end{pmatrix}
        =\frac{1-p}{m-1}J_m +\frac{mp-1}{m-1}I_m,
    \end{equation}
and the $m'\times m'$ matrix
\begin{equation}\label{replacement matrix 2'}
        A'=\begin{pmatrix}
            p &\frac{1-p}{m'-1}&\cdots &\frac{1-p}{m'-1}\\
            \frac{1-p}{m'-1} &p&\ddots&\vdots\\
            \vdots&\ddots&\ddots&\vdots\\
            \frac{1-p}{m'-1}&\cdots &\cdots &p
        \end{pmatrix}
        =\frac{1-p}{m'-1}J_{m'} +\frac{m'p-1}{m'-1}I_{m'},
    \end{equation}
    where $J_m$ denotes $m\times m$ all-ones matrix and $I_m$ is the $m\times m$ identity matrix. The matrix $A$ has the eigenvalues $\lambda_1 = 1$ of multiplicity one and $\lambda_2 = (mp - 1)/(m - 1)$ of multiplicity $m - 1$. The right eigenvector associated with the largest eigenvalue $\lambda_1$ is $v_1=\frac{1}{m}(1,\dots,1)^\top\in \R^m$. The same holds for $J_{m'}, I_{m'},\lambda_1^{'},\lambda_2^{'}$ and $v_1^{'}$ substituting $m$ with $m'$.

It is well-known (cf. \cite{athreya1968embedding, chauvin2011limit, janson2004functional, kesten1966additional}) that the asymptotic behavior  of the urn process depends on the position of $\lambda_2/\lambda_1$ relative to $1/2$. In our settings, for each process, the critical values associated with the memory parameter $p$ are
\begin{equation}\label{critical parameter 2}
    p_c^m=\frac{m+1}{2m}
\end{equation}
by solving $\lambda_2/\lambda_1= 1/2 $, and
\begin{equation}\label{critical parameter 2'}
    p_c^{m'}=\frac{m'+1}{2m'}
\end{equation}
by solving $\lambda_2^{'}/\lambda_1^{'}= 1/2 $. 
We observe that when $\mathcal{U}=\mathcal{W}= \{\pm e_1, \pm e_2, \dots,\pm e_d\}$, i.e., $m=2d$, then $p_c^m=(2d+1)/(4d)$, which coincides with the critical value derived by Bercu and Laulin \cite{bercu2019martierw} and Bertenghi \cite{bertenghi2022functionallimit}. Moreover, for $d = 1$, we recover $p_c^m = 3/4$, corresponding to the phase transition originally observed by Sch\"{u}tz and Trimper \cite{schutz2004elephants}. 

As shown in (\ref{relation between ERW and urn with |V_0|=2|}), when $\mathcal{U}\neq \mathcal{W}$, we need to consider two P\'{o}lya urn processes within a single ERW. In Section \ref{section main result 2}, we derive asymptotic results for ERW by distinguishing cases according to the relative magnitudes of $ p_c^m$ and $p_c^{m'}$.

In addition, defining $a=(mp-1)/(m-1)$, we note that the following equivalence is obtained:
\begin{equation}\label{superdiffusive relation a and p}
    a<\frac{1}{2}\Leftrightarrow p<p_c^m,\quad a=\frac{1}{2}\Leftrightarrow p=p_c^m,\quad a>\frac{1}{2}\Leftrightarrow p>p_c^m.
\end{equation}
This relation also holds for $p_c^{m'}$ and $a'$, by taking $a'=(m'p-1)/(m'-1)$. 

\begin{rem}
Obviously, 
we can generalize the case where $\Gamma$ is generated by the sets of vectors $\{u_{i,1}\}_{i=1}^{m_1}, \dots,\{u_{i,l}\}_{i=1}^{m_l}\subset \mathbb{R}^d$ for $l\ge3$. The ERW $\{S_n\}_{n=0}^\infty$ is defined on the multipartite graph $\Gamma$ partitioned into $l$ vertex classes, $Z_1, Z_2, \dots, Z_l$. The walk is enforced to move cyclically between vertex classes: A step taken from $Z_i$ must lead to a vertex in $Z_{i+1}$ for $i=1,\dots,l-1$, and a step taken from $Z_l$ must lead back to a vertex in $Z_1$. Under this procedure, we can establish the same class of results for the ERW as those presented in Section \ref{section main result 2} by employing similar analytical techniques. However, considering the \textit{kagome lattice}, see Fig.~\ref{fig:kagome lattice}, corresponding to $l=3$, the walk on such a multipartite graph moves randomly in general, and unlike the above cyclical setting or the strictly bipartite case $l=2$ considered in this paper, the number of visits to each vertex class is not deterministically balanced and a different type of analysis is required.
\begin{figure}[htbp]
    \centering
    \includegraphics[width=0.4\linewidth]{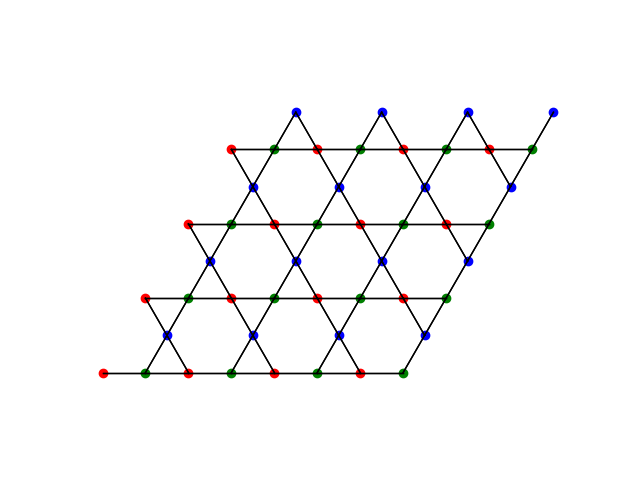}
    \caption{\textbf{Figure $2$}. Kagome lattice. The vertices are partitioned into three structurally distinct classes, represented by red, blue, and green dots.}
    \label{fig:kagome lattice}
\end{figure}  
    \end{rem}

\section{Main result}\label{section main result 2}

In this section, we derive a strong law of large numbers and functional limit theorems for the ERW on $\Gamma$ in all regimes. For our purpose, it is most convenient to adapt the general results of Janson \cite{janson2004functional} and to translate them into the setting of the ERW by using the connection established in (\ref{relation between ERW and urn with |V_0|=2|}) and (\ref{relation between ERW and urn with |V_0|=1|}) together with the continuous mapping theorem. 

It should be noted that the results presented below extend those results found in Bertenghi \cite{bertenghi2022functionallimit}. We also show, in relation to Remark~\ref{rem U=W case} and Remark~\ref{rem U=W polya connection}, that the limiting distributions of Type-II ERW under  $\mathcal{U}=\mathcal{W}$ and Type-I ERW coincide,  except for the superdiffusive regime. 

\subsection{A LLN-type convergence result}

Our first result concerns a strong law of large numbers.  The following property holds regardless of the regime, in other words, we eliminate the dependency on the numbers $m$ and $m'$ of edges emanating from each vertex.
\begin{thm}\label{LLN}
Let $\{S_n\}_{n=0}^\infty$ be Type-I or Type-II ERW. Then, for all $p\in(0,1)$, we have the almost sure convergence
\begin{equation*}
    \frac{S_{n}}{n}\to \frac{1}{2}(\Bar{u}+\Bar{w}) \quad \text{as $n\to \infty$},
    \end{equation*}
    where $\Bar{u}=\frac{1}{m}\sum_{i=1}^m u_i$ and $\Bar{w}=\frac{1}{m'}\sum_{j=1}^{m'} w_j$.
\end{thm}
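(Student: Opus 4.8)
The plan is to push everything through the P\'{o}lya urn representation and reduce the theorem to a law of large numbers for the urn compositions $X_n$ and $Y_n$. The decisive structural feature is that the total number of balls is deterministic: we start with a single ball at time $n=1$ and add exactly one ball at every subsequent draw, so $\mathbf{1}^\top X_n = n$ and $\mathbf{1}^\top Y_n = n$, where $\mathbf{1}=(1,\dots,1)^\top$. Hence $X_n/n$ and $Y_n/n$ are probability vectors for every $n$, and it suffices to prove the almost sure convergences $X_n/n \to v_1 = \tfrac{1}{m}\mathbf{1}$ and $Y_n/n \to v_1' = \tfrac{1}{m'}\mathbf{1}$.

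To obtain these, I would invoke the strong law for generalized P\'{o}lya urns of Janson \cite{janson2004functional}, specialized to the replacement matrix $A$ of (\ref{replacement matrix 2}); alternatively one can argue directly. A draw at step $n$ picks color $i$ with probability $X_n^i/n$ and adds balls according to $A$, so that $\E[X_{n+1}\mid \mathcal{F}_n] = (I + \tfrac{1}{n}A)X_n$, using that $A$ is symmetric. Since $A$ is doubly stochastic with eigenvalues $\lambda_1 = 1$ and $\lambda_2 = (mp-1)/(m-1)$, and since $\lambda_2 < 1$ for every $p\in(0,1)$, the components transverse to $\mathbf{1}$ are subdominant: for any $c$ with $\mathbf{1}^\top c = 0$ (equivalently, $c$ in the $\lambda_2$-eigenspace), the process $c^\top X_n$ divided by $\prod_{k}(1+\lambda_2/k)\asymp n^{\lambda_2}$ is a martingale that converges almost surely, whence $c^\top X_n = o(n)$. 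Together with $\mathbf{1}^\top X_n = n$ this forces $X_n/n \to v_1$ almost surely, and the identical argument gives $Y_n/n \to v_1'$. Observe that only $\lambda_2 < \lambda_1$ is used, which explains why the conclusion is insensitive to the regime: the position of $\lambda_2$ relative to $1/2$ governs only the second-order fluctuations, not the deterministic first-order limit.

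It remains to assemble the two cases and the odd times. For Type-II ERW, (\ref{relation between ERW and urn with |V_0|=2|}) yields $S_{2n}/(2n) \inlaw \tfrac{1}{2}\big(\sum_{i} (X_n^i/n)\,u_i + \sum_{j}(Y_n^j/n)\,w_j\big)$, which converges to $\tfrac{1}{2}(\Bar{u} + \Bar{w})$ by the previous step; for Type-I ERW, (\ref{relation between ERW and urn with |V_0|=1|}) gives $S_{2n}/(2n)\inlaw \sum_i (X_{2n}^i/(2n))\,u_i \to \Bar{u}$, and since $\mathcal{U}=\mathcal{W}$ forces $\Bar{w} = \Bar{u}$, this limit again equals $\tfrac{1}{2}(\Bar{u} + \Bar{w})$. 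The one point demanding care, and the step I regard as the crux, is that (\ref{relation between ERW and urn with |V_0|=2|}) and (\ref{relation between ERW and urn with |V_0|=1|}) are equalities in distribution of the whole trajectories rather than of single marginals; because $\{\lim_n S_{2n}/(2n) = c\}$ is a measurable event on path space, the almost sure convergence established on the urn side transfers to almost sure convergence of $S_{2n}/(2n)$. Finally, odd times require no new work: writing $S_{2n-1} = S_{2(n-1)} + \sigma_n$ for Type-II (resp.\ $S_{2n-1}=S_{2n}-\sigma_{2n}$ for Type-I), the correction is a single step drawn from a finite set and hence bounded, so $S_{2n-1}/(2n-1)$ has the same almost sure limit, giving convergence of $S_n/n$ along all $n$.
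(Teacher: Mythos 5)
Your main route is the same as the paper's: invoke Janson's strong law for the urn compositions ($X_n/n \to \lambda_1 v_1$, $Y_n/n \to \lambda_1' v_1'$) and transfer through the distributional identities (\ref{relation between ERW and urn with |V_0|=2|}) and (\ref{relation between ERW and urn with |V_0|=1|}); your extra care about why almost sure convergence survives an equality in law of whole trajectories, and about the odd times, is correct and in fact more explicit than the paper, which just says ``continuous mapping theorem.'' One caveat on the \emph{alternative} direct argument you sketch: for $c$ with $\mathbf{1}^\top c=0$ the process $M_n = c^\top X_n/\prod_k(1+\lambda_2/k)$ is indeed a martingale, but when $\lambda_2\le 1/2$ it is not $L^1$-bounded (one has $\E[(c^\top X_n)^2]\asymp n$ in the diffusive regime, so $\E|M_n|$ can grow like $n^{1/2-\lambda_2}$), and the martingale convergence theorem does not apply; so the claim that only $\lambda_2<\lambda_1$ is used is not borne out by that sketch as written. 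The conclusion $c^\top X_n=o(n)$ a.s.\ is still true in all regimes, but to get it elementarily you would need, e.g., the second-moment bound plus a Borel--Cantelli/subsequence argument rather than a.s.\ martingale convergence. Since your primary argument is the citation to Janson, this does not affect the validity of the proof as a whole.
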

\begin{proof}
First, we consider Type-II ERW. By Theorem $3.21$ in Janson \cite{janson2004functional}, we have the almost sure convergence
\begin{equation*}
    \frac{X_n}{n}\to \lambda_1 v_1=\frac{1}{m}\begin{pmatrix}
    1 \\ 1 \\ \vdots \\ 1 \end{pmatrix} \quad \text{as $n\to \infty$}
\end{equation*}
and
\begin{equation*}
    \frac{Y_n}{n}\to \lambda_1^{'} v_1^{'}=\frac{1}{m'}\begin{pmatrix}
    1 \\ 1 \\ \vdots \\ 1 \end{pmatrix} \quad \text{as $n\to \infty$},
\end{equation*}
where $\lambda_1$ and $\lambda_1^{'}$ are the largest eigenvalue of the replacement matrix $A$ and $A'$ as given in (\ref{replacement matrix 2}) and (\ref{replacement matrix 2'}), and  $v_1$ and $v_1^{'}$ are its corresponding (right) eigenvector, respectively. Hence, the claim follows by (\ref{relation between ERW and urn with |V_0|=2|}) and the continuous mapping theorem.

Similarly, for Type-I ERW, we obtain the almost sure convergence $S_{n}/n\to \Bar{u}$ as $n\to \infty$ from (\ref{relation between ERW and urn with |V_0|=1|}). 

This concludes the proof.
\end{proof}

\begin{rem}\label{rem p=1}
If $p=1$, which is not covered by the law of large numbers of Theorem~\ref{LLN}, the
ERW is trivial since by definition of the process one then has $\sigma_n=\sigma_1$ and $\tau_n=\tau_1$ for all $n \ge 1$. Hence,
$S_{2n}/n=u_{i_0}+w_{j_0}$ and $S_{n}/n=u_{i_0}$ hold true for Type-II and Type-I ERW, respectively.
\end{rem}

\subsection{Functional-type convergence results}

Next, we discuss the functional-type limit theorems. Their convergence results concern the distributional convergence of the ERW on $\Gamma$ in the Skorokhod space $D([0,\infty))$
of right-continuous functions with left-hand limits. In what follows, we assume without loss of generality that $p_c^m\le p_c^{m'}$, as the opposite case can be treated similarly. We classify the possible situations into three cases and we derive the corresponding asymptotic results: (I) $0< p<p_c^m\le p_c^{m'}$ (Theorem~\ref{0<p<p_c^m p_c^{m'}}), (II) $0< p_c^m=p=p_c^{m'}$ (Theorem~\ref{0<p_c^m=p=p_c^{m'}}), 
(III) $0<p_c^m= p_c^{m'}<p$ (Theorem~\ref{supersuperdiffusive V_0=2}). 
The remaining parameter regimes 
$0 < p_c^m = p < p_c^{m'}$, 
$0 < p_c^m < p \le p_c^{m'}$, 
and 
$0 < p_c^m < p_c^{m'} < p$
are discussed in 
Remark~\ref{rem 0<p_c^m=p<p_c^{m'}} 
and 
Remark~\ref{rem 0<p_c^m< p_c^m'<p}.

First, we provide the functional central limit theorem in the diffusive regime.

\begin{thm}\label{0<p<p_c^m p_c^{m'}}(\textbf{\textit{diffusive regime}}) Let $\{S_n\}_{n=0}^\infty$ be Type-I or Type-II ERW and $0< p<p_c^m\le p_c^{m'}$. Then, we have the distributional convergence in $D[0,\infty)$,
\begin{equation*}
    \left\{\frac{S_{\lfloor 2nt\rfloor}-nt(\Bar{u}+\Bar{w})}{\sqrt{n}}\right\}_{t\ge 0} \Rightarrow \left\{W_t\right\}_{t\ge 0} \quad \text{as $n\to \infty$},
\end{equation*}
where $\left\{W_t\right\}_{t\ge 0}$ is a centered continuous $\R^d$-valued Gaussian process with $W_0=\mathbf{0}$, and for $0<s\le t$, the covariance structure is specified by
\begin{equation*}
    \mathbb{E}W_sW_t^\top=C_{a} s\left(\frac{t}{s}\right)^{a}\Sigma(\mathcal{U})+C_{a'}s\left(\frac{t}{s}\right)^{a'}\Sigma(\mathcal{W}),
\end{equation*}
where $a$ is given in (\ref{superdiffusive relation a and p}), $C_{a}=1/(1-2a)$,
\begin{equation*}
\Sigma(\mathcal{U})=\frac{1}{m}\sum_{i=1}^m \left(u_i-\Bar{u}\right)\left(u_i-\Bar{u}\right)^\top\quad \text{and}\quad \Sigma(\mathcal{W})=\frac{1}{m'}\sum_{j=1}^{m'} \left(w_j-\Bar{w}\right)\left(w_j-\Bar{w}\right)^\top.
\end{equation*}
\end{thm}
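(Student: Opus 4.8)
The plan is to transfer the problem from the ERW to the two independent urn processes $X_n$ and $Y_n$ via the distributional identity \eqref{relation between ERW and urn with |V_0|=2|}, and then invoke Janson's functional central limit theorem for P\'olya-type urns in the regime $\lambda_2/\lambda_1 < 1/2$ (equivalently $a < 1/2$, i.e. $p < p_c^m$, by \eqref{superdiffusive relation a and p}). Since $p < p_c^m \le p_c^{m'}$ puts both $a$ and $a'$ strictly below $1/2$, each urn is in its own diffusive regime, so Janson's Theorem (the $\lambda_2 < \lambda_1/2$ case, Theorem~3.31 in \cite{janson2004functional}) applies separately to $X_n$ and to $Y_n$. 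First I would record that, after centering by the deterministic drift $n\lambda_1 v_1$ (respectively $n\lambda_1' v_1'$), the rescaled fluctuation processes $\{(X_{\lfloor nt\rfloor} - nt\lambda_1 v_1)/\sqrt{n}\}_t$ and $\{(Y_{\lfloor nt\rfloor} - nt\lambda_1' v_1')/\sqrt{n}\}_t$ converge in $D[0,\infty)$ to independent centered Gaussian processes whose covariance kernels are given explicitly by Janson's formula in terms of the eigenstructure of $A$ and $A'$.

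The key computational step is to identify the limiting covariance. Applying the linear map $x \mapsto \sum_i x^i u_i$ to the $X$-fluctuation and $y \mapsto \sum_j y^j w_j$ to the $Y$-fluctuation, and using that the step vectors are fixed deterministic vectors, I would push Janson's matrix-valued covariance through these maps. Because the nondominant eigenvalue $\lambda_2 = a$ has multiplicity $m-1$ with eigenspace orthogonal to $v_1 = \tfrac1m(1,\dots,1)^\top$, the only surviving contribution projects onto the space spanned by the centered vectors $u_i - \bar u$; this is exactly what produces the matrix $\Sigma(\mathcal{U}) = \tfrac1m\sum_i (u_i-\bar u)(u_i-\bar u)^\top$ and the scalar kernel $s(t/s)^a$ with the constant $C_a = 1/(1-2a)$, and symmetrically $\Sigma(\mathcal{W})$ with $a'$. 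By independence of the two urns the two Gaussian limits are independent, so their covariances add, yielding the stated covariance structure $\mathbb{E}W_s W_t^\top = C_a s(t/s)^a \Sigma(\mathcal{U}) + C_{a'} s(t/s)^{a'}\Sigma(\mathcal{W})$.

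Finally I would assemble the ERW statement: \eqref{relation between ERW and urn with |V_0|=2|} gives $\{S_{2n}\} \inlaw \{\sum_i X_n^i u_i + \sum_j Y_n^j w_j\}$, so the continuous mapping theorem applied to the joint convergence (valid since the two urns are independent and each converges) transfers the limit to $\{(S_{\lfloor 2nt\rfloor} - nt(\bar u + \bar w))/\sqrt{n}\}$. I would handle the passage from the even-time subsequence $S_{2n}$ to the full time parameter $S_{\lfloor 2nt\rfloor}$ by noting that a single extra $\sigma$-step contributes $O(1)$, hence $o(\sqrt n)$, so it is asymptotically negligible in $D[0,\infty)$. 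For Type-I ERW the identical conclusion follows from \eqref{relation between ERW and urn with |V_0|=1|} together with Theorem~\ref{LLN} giving the same drift $\bar u$, and one checks the covariances coincide in the diffusive regime exactly as anticipated in Remark~\ref{rem U=W case}.

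\textbf{Main obstacle.} I expect the principal difficulty to be the faithful translation of Janson's abstract covariance formula into the explicit projection onto $\mathrm{span}\{u_i - \bar u\}$: Janson's limiting covariance is expressed through the spectral decomposition of the replacement matrix and an integral over the process's time evolution, and one must verify that the component along $v_1$ (the conserved total mass) drops out and that the $\lambda_2$-eigenspace contribution, after the linear push-forward by the step vectors, collapses precisely to $\Sigma(\mathcal{U})$ with the constant $C_a = 1/(1-2a)$. Confirming that the cross terms between the two independent urns vanish and that the even-to-odd time interpolation is genuinely negligible in the Skorokhod topology are the remaining points requiring care, though both are expected to be routine.
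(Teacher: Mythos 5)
Your proposal follows essentially the same route as the paper's proof: apply Janson's Theorem 3.31(i) to each urn separately, push the resulting Gaussian limits through the linear maps $x\mapsto\sum_i x^i u_i$ and $y\mapsto\sum_j y^j w_j$ so that the kernel $\frac{c_{a,s,t}}{m}\bigl(I_m-\frac{J_m}{m}\bigr)$ collapses to $c_{a,s,t}\Sigma(\mathcal{U})$, add the two independent contributions, and transfer to the ERW via \eqref{relation between ERW and urn with |V_0|=2|} and the continuous mapping theorem. The only point treated slightly differently is the Type-I case, where the paper explicitly invokes the self-similarity of $V_t$ to absorb the time-change $X_{2n}$ versus $X_n$ into the factor $2$ in the covariance, but your "one checks the covariances coincide" covers the same (correct) conclusion.
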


\begin{rem}\label{rem diffusive V_0=1}
In Bertenghi case i.e.,  $\mathcal{U}=\mathcal{W}= \{\pm e_1, \pm e_2, \dots,\pm e_d\}$, we have $\Bar{u}=\mathbf{0}$, and $\Sigma(\mathcal{U})$ deduces to $I_d/d$ where $I_d$ is the $d\times d$ identity matrix. Therefore, the covariance coincides with the form given by Theorem $4.2$ in \cite{bertenghi2022functionallimit}. Furthermore, in this case, the coordinates of the limiting process $\{W_t=(W^1_t,\dots,W^d_t)^\top\}_{t\ge 0}$ can be expressed as independent \textit{noise reinforced Brownian motion} (cf. \cite{bertenghi2022functionallimit,bertoin2020noise, bertoin2021universality}). 
  
In general, $\Sigma(\mathcal{U})$ and $\Sigma(\mathcal{W})$ are typically non-diagonal, indicating that the diffusion along different coordinate axes is correlated. In this case, $W_t$ can be represented by the following Wiener integral: 
\begin{equation*}
      W_t= \Sigma(\mathcal{U})^{\frac{1}{2}}\int_0^t \left(\frac{t}{v}\right)^adB_v+\Sigma(\mathcal{W})^{\frac{1}{2}}\int_0^t \left(\frac{t}{v}\right)^{a'}dB^{'}_v,
\end{equation*}
where $\{B_t\}_{t\ge 0}$ and  $\{B^{'}_t\}_{t\ge 0}$ are independent standard $d$-dimensional Brownian motions. Moreover, as shown by Lamperti \cite{lamperti1962semi}, the limiting process $\{W_t\}_{t\ge 0}$ appearing in such a functional limit theorem is known to be a \textit{self-similar process}.
\end{rem}

\begin{proof}[Proof of Theorem~\ref{0<p<p_c^m p_c^{m'}}]
First, we consider Type-II ERW. We apply to Theorem $3.31$(i) in Janson \cite{janson2004functional}, which establishes
that $\{n^{-1/2}(X_{\lfloor nt\rfloor}-nt\lambda_1v_1)\}_{t\ge 0}$ and $\{n^{-1/2}(Y_{\lfloor nt\rfloor}-nt\lambda^{'}_1v^{'}_1)\}_{t\ge 0}$ converge in distribution in $D([0,\infty))$ towards a centered continuous $\R^m$-valued Gaussian processes $\{V_t\}_{t\ge 0}$ and a centered continuous $\R^{m'}$-valued Gaussian processes $\{V_t^{'}\}_{t\ge 0}$ with $V_0=V_0^{'}=\mathbf{0}$, respectively. We note that $\{V_t\}_{t\ge 0}$ and  $\{V_t^{'}\}_{t\ge 0}$ are independent, and the covariance structure of $\{V_t\}_{t\ge 0}$ is closer
specified under Remark $5.7$ Display $(5.6)$ in Janson \cite{janson2004functional}. By technical calculation for its covariance (see, Appendix A in Bertenghi \cite{bertenghi2022functionallimit} for details), we obtain for $0<s\le t$,
\begin{equation}\label{cov urn V_0=2 2}
    \E[V_s V_t^\top]=\frac{c_{a,s,t}}{m} \left(I_m-\frac{J_m}{m}\right)
\end{equation}
and 
\begin{equation}\label{cov urn V_0=2 2'}
    \E[V_s^{'} (V_t^{'})^\top]= \frac{c_{a',s,t}}{m'} \left(I_{m'}-\frac{J_{m^{'}}}{m'}\right),
\end{equation}
where  
\begin{equation}\label{coefficient C}
    c_{a,s,t}=C_{a}s\left(\frac{t}{s}\right)^a.
\end{equation}

By (\ref{relation between ERW and urn with |V_0|=2|}), we deduce that $\{n^{-1/2}(S_{\lfloor 2nt\rfloor}-(nt\Bar{u}+nt\Bar{w})\}_{t\ge 0}$ converges in distribution in $D([0,\infty))$ towards a $\R^d$-valued process, denoted by
$\{W_t\}_{t \ge 0}$ and specified by 
\begin{align}\label{gauss r.v. critical}
    W_t &=\sum_{i=1}^{m} V^{i}_tu_i +\sum_{j=1}^{m'}(V_t^{'})^{j}w_j
\end{align}
almost surely, where $V^i_t$ and $(V_t^{'})^{j}$ denote the $i$-th component of $V_t$ for $i=1,\dots, m$ and the $j$-th component of $V_t^{'}$ for $j=1,\dots, m'$, respectively. As $\{W_t\}_{t \ge 0}$ is a linear combination of the components of  $\{V_t\}_{t\ge 0}$ and  $\{V_t^{'}\}_{t\ge 0}$, it follows that $\{W_t\}_{t \ge 0}$ is a centered continuous $\mathbb{R}^d$-valued Gaussian process started from zero, and thus the law of $\{W_t\}_{t \ge 0}$ is completely determined by its covariance structure.

Since we have 
\begin{equation*}
    \mathbb{E}[V_s^iV_t^j]=\frac{c_{a,s,t}}{m}\left(\delta_{i,j}-\frac{1}{m}\right),
\end{equation*}  
where $\delta_{i,j}$ is the Kronecker delta function, it follows from the independence of $\{V_t\}_{t\ge 0}$ and  $\{V_t^{'}\}_{t\ge 0}$ that 
\begin{align*}
\E[W_s W_t^\top]
&= \sum_{i=1}^m\sum_{k=1}^m \E[V_s^iV_t^k]u_iu_k^\top+\sum_{j=1}^{m'} \sum_{l=1}^{m'}\E[(V_s^{'})^{j}(V_t^{'})^{l}]w_jw_l^\top\\
&=c_{a,s,t}\Sigma(\mathcal{U})+c_{a',s,t}\Sigma(\mathcal{W}).
\end{align*}

Similarly, for Type-I ERW, it follows from (\ref{relation between ERW and urn with |V_0|=1|}) and the continuity and self-similarity of $V_t$ that $\{n^{-1/2}(S_{\lfloor 2nt\rfloor}-2nt\Bar{u})\}_{t\ge 0}$ converges in distribution in $D([0,\infty))$ towards a centered continuous $\R^d$-valued Gaussian process $\{\Tilde{W_t}\}_{t \ge 0}$, and the covariance structure is specified by $\E[\Tilde{W}_s \Tilde{W_t}^\top]= 2c_{a,s,t}\Sigma(\mathcal{U})$. 

This concludes the proof.
\end{proof}

In the critical regime, we provide another functional central limit theorem with properly normalized.
\begin{thm}\label{0<p_c^m=p=p_c^{m'}}(\textbf{\textit{critical regime}}) Let $\{S_n\}_{n=0}^\infty$ be Type-I or Type-II ERW and $0<p_c^m=p=p_c^{m'}$. Then, we have the distibutional convergence in $D[0,\infty)$,
\begin{equation*}
    \left\{\frac{S_{\lfloor 2n^t\rfloor}-n^t(\Bar{u}+\Bar{w})}{n^{t/2}\sqrt{\log n}}\right\}_{t\ge 0} \Rightarrow \left\{W_t\right\}_{t\ge 0} \quad \text{as $n\to \infty$},
\end{equation*}
where $\left\{W_t\right\}_{t\ge 0}$ is a centered continuous $\R^d$-valued Gaussian process with $W_0=\mathbf{0}$, and for $0<s\le t$, the covariance structure is specified by
\begin{equation*}
\mathbb{E}W_sW_t^\top=s\Sigma(\mathcal{U},\mathcal{W}),
\end{equation*}
where 
\begin{equation}\label{Sigma_U,W}
\Sigma(\mathcal{U},\mathcal{W})=\Sigma(\mathcal{U})+\Sigma(\mathcal{W}).
\end{equation}
\end{thm}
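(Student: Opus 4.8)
The plan is to mirror the proof of Theorem~\ref{0<p<p_c^m p_c^{m'}}, replacing the diffusive part of Janson's functional central limit theorem by its critical counterpart. Since $0 < p_c^m = p = p_c^{m'}$, the relation (\ref{superdiffusive relation a and p}) forces $a = a' = 1/2$, so $\lambda_2/\lambda_1 = \lambda_2^{'}/\lambda_1^{'} = 1/2$ and both urn processes $X_n$ and $Y_n$ sit exactly at the critical threshold. First I would treat Type-II ERW: I invoke Theorem $3.31$ in Janson \cite{janson2004functional}, now in the critical case $a=1/2$, applied to each of the two independent urns, using the logarithmic time change $N = n^t$ together with the normalization $n^{t/2}\sqrt{\log n}$. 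This yields that $\{(n^{t/2}\sqrt{\log n})^{-1}(X_{\lfloor n^t\rfloor} - n^t\lambda_1 v_1)\}_{t \ge 0}$ and the analogous $Y$-process converge in $D([0,\infty))$ to independent centered continuous Gaussian processes $\{V_t\}_{t\ge0}$ and $\{V_t^{'}\}_{t\ge0}$ started at $\mathbf{0}$.

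The key computational step is to identify the covariance of $V_t$ in the critical scaling. Here the coefficient $c_{a,s,t}$ of (\ref{coefficient C}) degenerates, since $C_{a} = 1/(1-2a)$ blows up as $a \to 1/2$, and the extra factor $\sqrt{\log n}$ is precisely what compensates for this divergence. Carrying out the covariance calculation from Janson \cite{janson2004functional} in the critical regime (the critical analogue of Appendix A in Bertenghi \cite{bertenghi2022functionallimit}) should give, for $0 < s \le t$,
\[
\E[V_s V_t^\top] = \frac{s}{m}\left(I_m - \frac{J_m}{m}\right), \qquad \E[V_s^{'} (V_t^{'})^\top] = \frac{s}{m'}\left(I_{m'} - \frac{J_{m'}}{m'}\right),
\]
so that $\{V_t\}_{t\ge0}$ and $\{V_t^{'}\}_{t\ge0}$ are (degenerate) Brownian motions in the $n^t$ time scale. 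The subtlety worth isolating is the bookkeeping of the time change: with $N = n^t$ one has $\sqrt{N\log N} = \sqrt{t}\,n^{t/2}\sqrt{\log n}$, and it is exactly this absorbed factor $\sqrt{t}$ that turns the marginal critical variance into the linear-in-$s$ covariance above, i.e.\ into a genuine Brownian covariance $\min(s,t)\,\Sigma_0$ rather than the self-similar form of the diffusive case.

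With these covariances in hand I would transfer to the ERW via (\ref{relation between ERW and urn with |V_0|=2|}) exactly as in the diffusive proof: the limit process is $W_t = \sum_{i=1}^m V_t^{i} u_i + \sum_{j=1}^{m'} (V_t^{'})^{j} w_j$, a linear image of independent Gaussian processes, hence centered, continuous, $\R^d$-valued and Gaussian, so its law is determined by its covariance. Using $\E[V_s^i V_t^k] = \frac{s}{m}(\delta_{i,k} - 1/m)$, the independence of $\{V_t\}_{t\ge0}$ and $\{V_t^{'}\}_{t\ge0}$, and the variance identity $\frac{1}{m}\sum_i u_i u_i^\top - \Bar{u}\,\Bar{u}^\top = \Sigma(\mathcal{U})$, a direct computation gives $\E[W_s W_t^\top] = s\Sigma(\mathcal{U}) + s\Sigma(\mathcal{W}) = s\Sigma(\mathcal{U},\mathcal{W})$ by (\ref{Sigma_U,W}), as claimed.

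Finally, for Type-I ERW I would argue separately through (\ref{relation between ERW and urn with |V_0|=1|}): here a single urn is sampled at the index $2n^t$ rather than $n^t$, and the self-similarity of the critical limit produces a factor $2$, yielding covariance $2s\Sigma(\mathcal{U})$; since $\mathcal{U} = \mathcal{W}$ forces $\Sigma(\mathcal{W}) = \Sigma(\mathcal{U})$ and hence $\Sigma(\mathcal{U},\mathcal{W}) = 2\Sigma(\mathcal{U})$, this again equals $s\Sigma(\mathcal{U},\mathcal{W})$. I expect the main obstacle to be the critical covariance computation together with the careful handling of the logarithmic time change and the constant fixed by the $\sqrt{\log n}$ normalization, rather than the routine transfer via the continuous mapping theorem and the final linear-algebra step.
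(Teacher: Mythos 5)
Your proposal is correct and follows essentially the same route as the paper: Janson's Theorem 3.31(ii) applied to each urn in the critical scaling $n^{-t/2}(\log n)^{-1/2}$, the covariance identity $\E[V_sV_t^\top]=\frac{s}{m}(I_m-\frac{J_m}{m})$, transfer to the ERW via the urn correspondence and the linear map $W_t=\sum_i V_t^i u_i+\sum_j (V_t^{'})^j w_j$, and the separate Type-I argument yielding $2s\Sigma(\mathcal{U})=s\Sigma(\mathcal{U},\mathcal{W})$. Your extra bookkeeping of the time change $N=n^t$ and the absorbed $\sqrt{t}$ factor is a correct elaboration of a step the paper leaves implicit.
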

\begin{rem}
Similarly to Remark~\ref{rem diffusive V_0=1}, when we consider the Bertenghi case, the covariance coincides with the form given by Theorem $4.3$ in \cite{bertenghi2022functionallimit}. Moreover, in this case, $\{B_t\}_{t\ge 0}=\{\sqrt{d}W_t\}_{t\ge 0}$ is a standard $d$-dimensional Brownian motion. 
    
In general, $W_t$ can be represented by $W_t=\Sigma(\mathcal{U},\mathcal{W})^{1/2}B_t$, where $\{B_t\}_{t\ge 0}$ denotes a standard $d$-dimensional Brownian motion.
\end{rem}

\begin{proof}[Proof of Theorem~\ref{0<p_c^m=p=p_c^{m'}}]
First, we consider Type-II ERW. We apply to Theorem $3.31$(ii) in Janson \cite{janson2004functional}, which establishes that $\{n^{-t/2}(\log n)^{-1/2}(X_{\lfloor n^t\rfloor}-n^t\lambda_1v_1)\}_{t\ge 0}$ and $\{n^{-t/2}(\log n)^{-1/2}(Y_{\lfloor n^t\rfloor}-n^t\lambda_1^{'}v_1^{'})\}_{t\ge 0}$ converge in distribution in $D([0,\infty))$ towards a centered continuous $\R^m$-valued Gaussian processes $\{V_t\}_{t\ge 0}$ and a centered continuous $\R^{m'}$-valued Gaussian processes $\{V_t^{'}\}_{t\ge 0}$ with $V_0=V^{'}_0=\mathbf{0}$, respectively. We note that $\{V_t\}_{t\ge 0}$ and  $\{V_t^{'}\}_{t\ge 0}$ are independent, and the covariance structures are given by Display $(3.27)$ in Janson \cite{janson2004functional}. By technical calculation for its covariance (see, Appendix A in Bertenghi \cite{bertenghi2022functionallimit} for details), we obtain for $0<s\le t$,
\begin{equation}\label{cov urn V_0=2, critical}
    \E[V_s V_t^\top]= \frac{s}{m} \left(I_m-\frac{J_m}{m}\right).
\end{equation}

By (\ref{relation between ERW and urn with |V_0|=2|}), we deduce that $\{n^{-t/2}(\log n)^{-1/2}(S_{\lfloor 2n^t\rfloor}-n^t(\Bar{u}+\Bar{w}))\}_{t\ge 0}$ converges in distribution in $D([0,\infty))$ towards an $\R^d$-valued process $\{W_t\}_{t \ge 0}$, specified by 
\begin{align}\label{gauss r.v. critical 2}
    W_t &=\sum_{i=1}^{m} V^{i}_tu_i +\sum_{j=1}^{m'}(V_t^{'})^{j}w_j.
\end{align}
almost surely, where $V^i_t$ and $(V_t^{'})^{j}$ denote the $i$-th component of $V_t$ for $i=1,\dots, m$ and the $j$-th component of $V_t^{'}$ for $j=1,\dots, m'$, respectively. As $\{W_t\}_{t \ge 0}$ is a linear combination of the components of  $\{V_t\}_{t\ge 0}$ and  $\{V_t^{'}\}_{t\ge 0}$, it follows that $\{W_t\}_{t \ge 0}$ is a centered continuous $\mathbb{R}^d$-valued Gaussian process started from zero, and its covariance structure follows from the independence of $\{V_t\}_{t\ge 0}$ and  $\{V_t^{'}\}_{t\ge 0}$ and (\ref{cov urn V_0=2, critical}) that
\begin{align*}
    \E[W_s W_t^\top]
    &=s\Sigma(\mathcal{U},\mathcal{W}).
\end{align*} 

Similarly, for Type-I ERW, it follows from (\ref{relation between ERW and urn with |V_0|=1|}) and the continuity of $V_t$ that $\{n^{-t/2}(\log n)^{-1/2}(S_{\lfloor 2n^t\rfloor}-2n^t\Bar{u})\}_{t\ge 0}$  converges in distribution in $D([0,\infty))$ towards a centered continuous $\R^d$-valued Gaussian process $\{\Tilde{W_t}\}_{t \ge 0}$ and the covariance structure is specified by $\E[\Tilde{W_s} \Tilde{W_t}^\top]= 2s\Sigma(\mathcal{U})$. 

This concludes the proof.
\end{proof}

\begin{rem}\label{rem 0<p_c^m=p<p_c^{m'}}
In the case $0<p_c^m=p<p_c^{m'}$, under the scaling by $n^{-t/2}(\log n)^{-1/2}$, we observe that the asymptotic behavior of $X$ dominates that of $Y$. Hence, for Type-II ERW, we obtain a centered continuous $\R^d$-valued Gaussian process $\left\{W_t\right\}_{t\ge 0}$ with the covariance structure 
\begin{equation*}
    \mathbb{E}W_sW_t^\top=s\Sigma(\mathcal{U}).
\end{equation*}
\end{rem}

Finally, we consider the limit theorems related to the superdiffusive regime. In contrast to the previous two regimes, the Type-II ERW under $\mathcal{U}=\mathcal{W}$ and the Type-I ERW exhibit different limiting distributions.

\begin{thm}\label{supersuperdiffusive V_0=2}(\textbf{\textit{superdiffusive regime}}) Let $0<p_c^m= p_c^{m'}<p$ and $a$ be given by (\ref{superdiffusive relation a and p}). Then, the following almost sure functional convergence holds: For Type-II ERW $\{S_n\}_{n=0}^\infty$, we have
\begin{equation*}
    \left\{\frac{S_{\lfloor 2nt\rfloor}-nt(\Bar{u}+\Bar{w})}{n^{a}}\right\}_{t\ge 0} \Rightarrow \left\{t^{a}L\right\}_{t\ge 0} \quad \text{as $n\to \infty$},
\end{equation*}
where $L$ is some $\mathbb{R}^d$-valued random variable different from zero.

For Type-I ERW, we have 
\begin{equation*}
    \left\{\frac{S_{\lfloor nt\rfloor}-nt\Bar{u}}{n^{a}}\right\}_{t\ge 0} \Rightarrow \left\{t^{a}\Tilde{L}\right\}_{t\ge 0} \quad \text{as $n\to \infty$},
\end{equation*}
where $\Tilde{L}$ is some $\mathbb{R}^d$-valued random variable different from zero.
\end{thm}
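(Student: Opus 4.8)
The plan is to reduce the superdiffusive functional convergence for the ERW to the corresponding statement for the underlying P\'{o}lya urns, exactly as in the diffusive and critical regimes, and then to invoke the almost sure convergence result for urns in the superdiffusive phase (Theorem $3.31$(iii) in Janson \cite{janson2004functional}). The key structural difference from the previous two theorems is that here the fluctuations are governed by the \emph{second} eigenvalue $\lambda_2$ (equivalently by $a>1/2$), and Janson's theory provides almost sure convergence of the rescaled composition vector to a single limiting random variable rather than distributional convergence to a Gaussian process.

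First I would treat Type-II ERW. Applying Theorem $3.31$(iii) in Janson \cite{janson2004functional} to the two independent urn processes $X_n$ and $Y_n$ gives almost sure convergence of $\{n^{-a}(X_{\lfloor nt\rfloor}-nt\lambda_1 v_1)\}_{t\ge 0}$ and $\{n^{-a'}(Y_{\lfloor nt\rfloor}-nt\lambda_1' v_1')\}_{t\ge 0}$ to limits of the form $\{t^a \Xi\}_{t\ge 0}$ and $\{t^{a'}\Xi'\}_{t\ge 0}$, where $\Xi$ and $\Xi'$ are $\R^m$- and $\R^{m'}$-valued random variables lying in the eigenspaces associated with $\lambda_2$ and $\lambda_2'$. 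Since we are in the regime $p_c^m=p_c^{m'}$, which by (\ref{superdiffusive relation a and p}) forces $a=a'$, both urn fluctuations share the \emph{same} growth exponent $n^a$, so neither dominates the other. I would then transfer this through the representation (\ref{relation between ERW and urn with |V_0|=2|}) and the continuous mapping theorem, writing the limit as $L=\sum_{i=1}^m \Xi^i u_i+\sum_{j=1}^{m'}(\Xi')^j w_j$, which yields the asserted convergence to $\{t^a L\}_{t\ge 0}$. To confirm $L\ne 0$ almost surely, I would argue that $\Xi$ projects onto the eigenspace orthogonal to $v_1$ and that the step vectors $u_i$ are genuinely distinct, so the linear image does not collapse to zero; the non-degeneracy of Janson's limit variable (it is non-zero a.s. since $\lambda_2>0$ in this regime) feeds directly into this.

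For Type-I ERW the argument is parallel but uses the single-urn connection (\ref{relation between ERW and urn with |V_0|=1|}), so that $\{S_{2n}\}\inlaw\{\sum_i X_{2n}^i u_i\}$, and the fluctuations come only from the exponent $a$ attached to $\mathcal{U}$. Applying Janson's superdiffusive result to the single process $X_n$ gives $\tilde L=\sum_{i=1}^m \Xi^i u_i$ after rescaling by $n^a$ and reindexing the time horizon. This is precisely the point flagged in Remark~\ref{rem U=W case}: in the diffusive and critical regimes the limiting \emph{law} of the ERW is insensitive to whether past steps are sampled from the whole history (Type-I) or from odd/even sub-histories (Type-II), because the Gaussian covariances assemble identically; but in the superdiffusive regime the limit is a genuine random variable whose law retains a memory of the deterministic initial step, so the two models need not coincide.

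The main obstacle is the identification and non-degeneracy of the limit variable $L$ (resp.\ $\tilde L$), rather than the convergence itself, which is a direct citation. Janson's Theorem $3.31$(iii) guarantees existence of an a.s.\ limit for the urn, but pinning down that the induced $\R^d$-valued limit is non-zero requires checking that the projection onto the $\lambda_2$-eigenspace is not annihilated by the linear map $v\mapsto\sum_i v^i u_i$; this is where the hypotheses $m\ge 2$ and the distinctness of the step vectors are used. A secondary technical point is the bookkeeping of the time-scaling: the Type-II walk advances two lattice steps per urn draw while Type-I advances one, so the floor functions $\lfloor 2nt\rfloor$ versus $\lfloor nt\rfloor$ and the self-similar scaling $t^a$ must be matched carefully against Janson's normalization to ensure the exponents align. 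Once these are in hand, the statement follows by the continuous mapping theorem exactly as in the proofs of Theorem~\ref{0<p<p_c^m p_c^{m'}} and Theorem~\ref{0<p_c^m=p=p_c^{m'}}.
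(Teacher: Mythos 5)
Your proposal follows essentially the same route as the paper: invoke Janson's almost sure convergence for the superdiffusive urn (the paper cites the last part of Theorem $3.24$ rather than Theorem $3.31$(iii), but the content used --- a.s.\ convergence of $n^{-a}(X_{\lfloor nt\rfloor}-nt\lambda_1 v_1)$ to $t^{a}\hat{W}$ with $\hat{W}$ a nonzero vector in the $\lambda_2$-eigenspace --- is identical), note that $p_c^m=p_c^{m'}$ forces $a=a'$ so the two urn fluctuations share the same exponent, and transfer through (\ref{relation between ERW and urn with |V_0|=2|}) and (\ref{relation between ERW and urn with |V_0|=1|}) to get $L=\sum_i(\hat{W}_1^i u_i+\hat{W}_2^i w_i)$ and $\Tilde{L}=\sum_i\hat{W}_3^i u_i$, exactly as in (\ref{superdiffusive r.v. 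V_0=2}) and (\ref{superdiffusive r.v. V_0=1}). The non-degeneracy of $L$ that you flag as the main obstacle is simply asserted in the paper, so your sketch is, if anything, slightly more careful on that point.
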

\begin{proof}
First, we consider Type-II ERW. We note that in the notation of Theorem $3.24$ in Janson \cite{janson2004functional}, we have $\Lambda'_
{III} = \{\lambda_2= a\}$. We are
therefore in the setting of the last part of the cited theorem and get that $\{n^{-a}(X_{\lfloor nt\rfloor}-nt\lambda_1v_1)\}_{t\ge 0}$ and $\{n^{-a}(Y_{\lfloor nt\rfloor}-nt\lambda_1v_1)\}_{t\ge 0}$ converge almost surely to $\{t^{a}\hat{W}_1\}_{t\ge 0}$ and $\{t^{a}\hat{W}_2\}_{t\ge 0}$, where $\hat{W}_1=(\hat{W}_1^1,\dots,\hat{W}_1^m)^\top$ and $\hat{W}_2=(\hat{W}_2^1,\dots,\hat{W}_2^m)^\top$ are some independent nonzero random vectors lying in the eigenspace $E_{\lambda_2}$ of the replacement matrix $A$ as displayed in (\ref{replacement matrix 2}). 

Hence, by  (\ref{relation between ERW and urn with |V_0|=2|}), we obtain the nonzero $\mathbb{R}^d$-valued random vector, which is constructed as the sum of independent random vectors,
\begin{equation}\label{superdiffusive r.v. V_0=2}
    L=\sum_{i=1}^m (\hat{W}_1^i u_i+\hat{W}_2^i w_i)
\end{equation}
almost surely.

Similarly, for Type-I ERW, we obtain the nonzero $\mathbb{R}^d$-valued random vector
\begin{equation}\label{superdiffusive r.v. V_0=1}
    \Tilde{L}=\sum_{i=1}^m \hat{W}_3^i u_i
\end{equation}
from (\ref{relation between ERW and urn with |V_0|=1|}) almost surely, where $\hat{W}_3=(\hat{W}_3^1,\dots,\hat{W}_3^m)^\top$ is some nonzero random vector lying in the eigenspace $E_{\lambda_2}$ of the replacement matrix $A$.

This concludes the proof.
\end{proof} 

\begin{rem}\label{rem 0<p_c^m< p_c^m'<p}
We refer to the case $0<p_c^m<p\le p_c^{m'}$ and $0<p_c^m< p_c^{m'}<p$. We first assume $0<p_c^m<p< p_c^{m'}$. By (\ref{superdiffusive relation a and p}), the term associated with $X$ which scales as $n^a$ dominates the term associated with $Y$ which scales as $\sqrt{n}$. Hence, as in the proof of Theorem~\ref{supersuperdiffusive V_0=2}, we obtain the limit random vector $L$ that depends only on the process $X$. The case $0<p_c^m<p=p_c^{m'}$ can be treated in the same way.

For the case $0<p_c^m< p_c^{m'}<p$, applying the scaling by $n^{a'}$ and the relation $p_c^m< p_c^{m'}\Rightarrow a<a'$, we observe that the term associated with $Y$ dominates the term associated with $X$, where $a'$ is given in (\ref{superdiffusive relation a and p}). Hence, we obtain the limit random vector $L$ that depends only on the process $Y$.
\end{rem}

\begin{rem}
From the proof of Theorem~\ref{supersuperdiffusive V_0=2}, we are interested in the distribution of $L$ given by (\ref{superdiffusive r.v. V_0=2}) and (\ref{superdiffusive r.v. V_0=1}). There is a substantial body of work on the asymptotic distribution of the superdiffusive ERW (cf. \cite{bercu2017martingale, bercu2019martierw, baur2016elephant, coletti2017central, guerin2023fixed, kubota2019gaussian, guerin2025onthelimit}). It is well known that, in contrast to the diffusive and critical regimes, the distribution of $L$ does
depend on the law of the initial step of the ERW. Gu\'{e}rin, Laulin and Raschel \cite{guerin2023fixed} provide detailed analyses of the distributional properties of the limiting random variable $L$, and also study in the higher-dimensional settings $\mathbb{Z}^d$. In this paper, we restrict our attention to the second moment, and it is expected that various further properties of $L$ could be derived by applying their framework, especially Theorem $3.8$ and Remark $3.5$ in \cite{bercu2019martierw}
and Lemma $3.2$ in \cite{guerin2023fixed}.

Assuming $\Bar{u}=\Bar{w}=\mathbf{0}$ and that the first step vectors $\sigma_1$ and $\tau_1$ are chosen uniformly from the $m$ directions, we obtain the following expressions for the first two moments
\begin{equation*}
    \mathbb{E}[L]=\frac{1}{\Gamma(a+1)}(\E[\sigma_1]+\E[\tau_1])=\mathbf{0},
\end{equation*}
\begin{equation*}
    \mathbb{E}[\Tilde{L}]=\frac{1}{\Gamma(a+1)}\E[\sigma_1]=\mathbf{0},
\end{equation*}
and
\begin{align*}
    \mathbb{E}[LL^\top]&= \frac{1}{\Gamma(2a+1)}(\E[\sigma_1\sigma_1^\top]+\E[\tau_1\tau_1^\top])+\frac{1}{(2a-1)\Gamma(2a+1)}\Sigma(\mathcal{U},\mathcal{W})\\
    &= \frac{1}{(2a-1)\Gamma(2a)}\Sigma(\mathcal{U},\mathcal{W}),
\end{align*}
\begin{align*}
    \mathbb{E}[\Tilde{L}\Tilde{L}^\top]&= \frac{1}{\Gamma(2a+1)}\E[\sigma_1\sigma_1^\top]+\frac{1}{(2a-1)\Gamma(2a+1)}\Sigma(\mathcal{U})\\
    &= \frac{1}{(2a-1)\Gamma(2a)}\Sigma(\mathcal{U}),
\end{align*}
where $\Sigma(\mathcal{U},\mathcal{W})$ is given by (\ref{Sigma_U,W}). We note that unlike in Type-I ERW, the distribution of $L$ for Type-II ERW depends on the behavior up to time $n=2$. For $\Tilde{L}$, when $\mathcal{U}=\mathcal{W}= \{\pm e_1, \pm e_2, \dots,\pm e_d\}$, we recover Theorem~$3.8$ in \cite{bercu2019martierw}.
\end{rem}

\section{Key quantities for typical examples}\label{section examples}

In this section, we present the explicit values of the key quantities derived in this paper for the representative examples, including those shown in Fig.~\ref{fig:lattices}.

\[
\begin{aligned}
&\text{(1) Standard lattice } \mathbb{Z}^d: 
&& \mathcal{U} = \mathcal{W} = \{\pm e_1, \pm e_2, \dots,\pm e_d\}, \ \\
&&& m=m'=2d,\ p_c^m=(2d+1)/(4d),\ \Bar{u}=\mathbf{0},\\
&&& a=(2dp-1)/(2d-1),\ C_a=(2d-1)/(1+2d-4dp),\\
&&& \Sigma(\mathcal{U})=\Sigma(\mathcal{W})=I_d/d.\\[4pt]
&\text{(2) Triangular lattice: } 
&& \mathcal{U} = \mathcal{W} = \{\pm u_1, \pm u_2, \pm u_3\},\\
&&&\text{}
u_1=(1,0)^\top,\ u_2=\left(\tfrac{1}{2},\tfrac{\sqrt{3}}{2}\right)^\top,\ 
u_3=\left(-\tfrac{1}{2},\tfrac{\sqrt{3}}{2}\right)^\top, \\
&&& m=m'=6,\ p_c^m=7/12,\ \Bar{u}=\mathbf{0},\\
&&& a=(6p-1)/5,\ C_a=5/(7-12p),\\
&&& \Sigma(\mathcal{U})=\Sigma(\mathcal{W})=I_2/2.\\[4pt]
&\text{(3) Hexagonal lattice: } 
&& \mathcal{U} = \{u_1,u_2,u_3\},\ 
\mathcal{W} = \{-u_1,-u_2,-u_3\},\\
&&&\text{} 
u_1=(0,1)^\top,\ 
u_2=\left(\tfrac{\sqrt{3}}{2},-\tfrac{1}{2}\right)^\top,\ 
u_3=\left(-\tfrac{\sqrt{3}}{2},-\tfrac{1}{2}\right)^\top, \\
&&& m=m'=3,\ p_c^m=p_c^{m'}=2/3,\ \Bar{u}=\Bar{w}=\mathbf{0},\\
&&& a=a'=(3p-1)/2,\ C_a=1/(2-3p),\\
&&& \Sigma(\mathcal{U})=\Sigma(\mathcal{W})=I_2/2.\\[4pt]
&\text{(4) Brick wall: } 
&& \mathcal{U} = \{\pm e_1, -e_2\},\quad 
\mathcal{W} = \{\pm e_1, e_2\},\\
&&& m=m'=3,\ p_c^m=2/3,\ \Bar{u}=\frac{1}{3}(0,-1)^\top,\ \Bar{w}=\frac{1}{3}(0,1)^\top,\\
&&& a=a'=(3p-1)/2,\ C_a=1/(2-3p),\\
&&& \Sigma(\mathcal{U})=\Sigma(\mathcal{W})=\frac{2}{9}\begin{pmatrix}
   3 & 0 \\
   0 & 1
\end{pmatrix}.
\end{aligned}
\]

We provide other examples that satisfy the conditions defined in Section~\ref{setting main}. 
\[
\begin{aligned}
&\text{(5)} 
&& \mathcal{U} = \{\pm u_1, \pm u_2\},\ \mathcal{W} = \{\pm e_1, \pm e_2\},\ u_1=(1,2)^\top,\ u_2=(1,-2)^\top,\\
&&& m=m'=4,\ p_c^{m}=5/8,\  \Bar{u}=\Bar{w}=\mathbf{0}, \ a=a'=(4p-1)/3,\\
&&&  C_{a}=3/(5-8p),\ \Sigma(\mathcal{U})=\begin{pmatrix}
   1 & 0 \\
   0 & 4
\end{pmatrix},\ \Sigma(\mathcal{W})=\frac{1}{2}I_2.
\end{aligned} 
\]

\[
\begin{aligned}
&\text{(6)} 
&& \mathcal{U} = \{\pm e_1, \pm e_2, (1,2)^\top\},\ \mathcal{W} = \{\pm e_1, \pm e_2\},\\
&&& m=5,\ m'=4,\ p_c^{m}=3/5,\ p_c^{m'}=5/8,\  \Bar{u}=\frac{1}{5}(1,2)^\top, \Bar{w}=\mathbf{0}, \\
&&&  a=(5p-1)/4,\ a'=(4p-1)/3,\ C_{a}=2/(3-5p),C_{a'}=3/(5-8p),\\
&&& \Sigma(\mathcal{U})=\frac{2}{25}\begin{pmatrix}
   7 & 4 \\
   4 & 13
\end{pmatrix},\ \Sigma(\mathcal{W})=\frac{1}{2}I_2.\\[4pt]
\end{aligned}
\]

\textbf{Acknowledgment.} I would like to express my deepest gratitude to my supervisor, Professor Tomoyuki Shirai, for his insightful discussions and guidance from the early stages of this project. This work was supported by WISE program (MEXT) at Kyushu University, and also supported the Kyushu University Fund ``Human Resource Development Initiative in Mathematics for Industry''.

\bibliographystyle{plain}
\bibliography{reference}
\end{document}